\documentclass[10pt,reqno]{amsart}

\usepackage{amsmath,amssymb,amsthm}
\usepackage{mathrsfs}
\usepackage{enumitem}
\usepackage{booktabs}
\usepackage{array}
\usepackage{hyperref}
\usepackage{cleveref}
\usepackage{graphicx}
\usepackage{caption}
\usepackage{subcaption}
\usepackage{xcolor}

\hypersetup{
  colorlinks=true,
  linkcolor=blue!70!black,
  citecolor=green!50!black,
  urlcolor=blue!70!black,
}

\theoremstyle{plain}
\newtheorem{theorem}{Theorem}[section]
\newtheorem{proposition}[theorem]{Proposition}
\newtheorem{lemma}[theorem]{Lemma}

\newtheorem{conjecture}[theorem]{Conjecture}

\theoremstyle{definition}

\newtheorem{example}[theorem]{Example}

\theoremstyle{remark}
\newtheorem{remark}[theorem]{Remark}

\DeclareMathOperator{\Br}{Br}
\DeclareMathOperator{\NS}{NS}

\DeclareMathOperator{\rk}{rk}
\DeclareMathOperator{\disc}{disc}
\DeclareMathOperator{\tr}{tr}

\DeclareMathOperator{\Frob}{Frob}
\DeclareMathOperator{\USp}{USp}
\newcommand{\Sha}{\text{\tencyr{Sh}}}
\newcommand{\Fq}{\mathbb{F}_q}
\newcommand{\Fqn}{\mathbb{F}_{q^n}}
\newcommand{\Pp}{\mathbb{P}^1}
\newcommand{\Ql}{\mathbb{Q}_\ell}
\newcommand{\calX}{\mathcal{X}}
\newcommand{\calO}{\mathcal{O}}

\font\tencyr=wncyr10

\renewcommand{\arraystretch}{1.15}

\title[Murmurations of Elliptic Curves over Function Fields]{Murmurations of Elliptic Curves\\over Function Fields}

\author[D.\ Wachs]{Dane Wachs}
\address{The University of Arizona, Tucson, AZ}
\email{wachs@arizona.edu}

\date{\today}

\subjclass[2020]{11G05, 11G40, 14G10, 14J27, 11M50}

\keywords{Elliptic curves, murmurations, function fields,
  Tate--Shafarevich group, Artin--Tate conjecture, cyclotomic polynomials}

\begin{document}

\begin{abstract}
We compute the first murmurations for elliptic curves over function
fields $\Fq(t)$: oscillatory patterns in average Frobenius traces that
separate rank-0 from rank-1 curves, with $z$-scores up to 256.
For the family $E_D\colon y^2 = x^3 + x + D(t)$ with $D$ monic
squarefree of degree~5, we enumerate $534{,}745$ curves across
$q = 7, 11, 13$ with exact BSD invariants.

All $L$-polynomials factor into cyclotomic polynomials---a weight-2
consequence of the Weil conjectures and Kronecker's theorem, independent
of CM.  Since $|\Sha| = L(1/q)$ in this family (a consequence of BSD
with trivial torsion and Tamagawa numbers), the $|\Sha|$ modulation of
murmurations is entirely a composition effect: different $|\Sha|$ strata
have different mixtures of $L$-polynomial types, and hence different
mean traces.

This yields an exact reweighting identity for the
$|\Sha|$-stratified murmuration density:
$M_s(d,q) = -\sum_\lambda f_{\lambda,s}\, p_d(\lambda)$,
where $\lambda$ ranges over cyclotomic types,
$f_{\lambda,s}$ is the type composition of the $|\Sha| = s$ stratum,
and $p_d(\lambda)$ is the degree-$d$ power sum of the unitarized roots.
Within each $|\Sha|$ stratum, joint cells---distinct $L$-polynomial
types sharing the same $|\Sha|$---show that the murmuration profile
carries arithmetic information strictly finer than $|\Sha|$ alone.
\end{abstract}

\maketitle
\setcounter{tocdepth}{2}
\tableofcontents


\section{Introduction}\label{sec:intro}

\subsection{The main result}

Let $q$ be a prime power, and consider the family of elliptic curves
\[
  E_D\colon y^2 = x^3 + x + D(t), \qquad
  D \in \Fq[t] \text{ monic squarefree, } \deg D = 5,
\]
over the function field $\Fq(t)$.  Each curve has an $L$-polynomial
$L(E_D, T)$ of degree at most~$8$ whose reciprocal roots encode the
arithmetic of $E_D$.  Since BSD is a theorem over
function fields~\cite{KT}, the Tate--Shafarevich group~$\Sha$
is finite with $|\Sha(E_D)|$ exactly computable.

Let $\calX_D \to \Pp$ denote the minimal regular model of $E_D$.  This is
an elliptic surface whose zeta function $\zeta(\calX_D, s)$ encodes the
point counts $|\calX_D(\Fqn)|$ for all $n \geq 1$.
The following is a consequence of the Grothendieck trace formula and the
Artin--Tate conjecture (proved by Milne~\cite{Milne}) applied to our
family.

\begin{proposition}[Zeta function determination]\label{thm:main}
If $L(E_D, T) = L(E_{D'}, T)$, then
$|\calX_D(\Fqn)| = |\calX_{D'}(\Fqn)|$ for all $n \geq 1$,
and consequently $|\Sha(E_D)| = |\Sha(E_{D'})|$.
\end{proposition}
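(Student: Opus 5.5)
The plan is to write the zeta function of $\calX_D$ as a product of factors coming from its $\ell$-adic cohomology and to check that, for this family, every factor except $H^2$ is the same for all $D$, and that the $H^2$ factor is determined by $L(E_D,T)$.

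\textbf{Cohomology of the surface.} By the Grothendieck--Lefschetz trace formula,
\[
  |\calX_D(\Fqn)| = \sum_{i=0}^{4} (-1)^i \tr\bigl(\Frob^n \mid H^i_{et}(\calX_{D,\bar\Fq},\Ql)\bigr).
\]
Since $E_D$ is non-isotrivial, $\calX_D \to \Pp$ has a section (the zero section). This gives:
\begin{itemize}[label={}]
\item $H^1 = H^3 = 0$, because the irregularity equals the genus of the base, which is $0$;
\item $H^0$ and $H^4$ contribute $1$ and $q^{2n}$;
\item $H^2$ decomposes as the trivial lattice (zero section, fibre class, and the non-identity components of the reducible fibres), plus the part $H^2_{\mathrm{tr}}$ orthogonal to it.
\end{itemize}

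\textbf{The $H^2$ factor.} The standard result (Shioda--Tate, together with the Grothendieck--Ogg--Shafarevich description of $L(E,T)$ as $\det(1-\Frob\, T \mid H^1(\Pp, j_*\mathcal{F}))$ for the middle extension sheaf) identifies $H^2_{\mathrm{tr}}(1)$ with $H^1(\Pp,j_*\mathcal{F})$. Hence
\[
  \det(1-\Frob\, T\mid H^2) = L(E_D,T/q)\cdot \det(1-\Frob\, T\mid \mathrm{Triv}).
\]
Frobenius acts on the trivial lattice through $q$ times the permutation action on fibre components.

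\textbf{Uniformity of the trivial lattice.} This is the main obstacle: checking that the trivial-lattice factor is independent of $D$. For $y^2=x^3+x+D$ with $D$ squarefree of degree $5$, the discriminant is $-16(4+27D^2)$, and $c_4=-48$ is a unit. So the finite bad fibres occur at the roots of $4+27D^2$, and all are of type $I_1$ (irreducible) provided $4+27D^2$ is squarefree. At $\infty$, substitute $t=1/s$ and rescale by $s^{6}$ (since $\deg D=5\le 6$, the model is $y^2=x^3+s^4x+s^6D(1/s)$). Then $v_\infty(\Delta)=12-2=10$ and $v(c_4)=4$, which gives a fibre of type $I_0^*$ twisted appropriately. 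Its type and Galois action on components are fixed by the degree and leading coefficient, and $D$ is monic, so they are the same for every $D$.

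I would verify this with Tate's algorithm. I would either assume, or check via the paper's standing genericity condition, that $4+27D^2$ is squarefree; otherwise I would note that the paper's BSD setup already presumes trivial Tamagawa numbers, which forces all finite bad fibres to have a single component. With this in hand, the trivial lattice factor is a fixed polynomial. Equivalently, the Euler characteristic and the number of fibre components are constant, so the degree of $L$ is constant too. Equality of $L(E_D,T)$ and $L(E_{D'},T)$ therefore gives equality of the full zeta functions, and hence $|\calX_D(\Fqn)|=|\calX_{D'}(\Fqn)|$ for all $n$.

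\textbf{Equality of $|\Sha|$.} By Milne's theorem, Artin--Tate holds for $\calX_D$. Combined with the relation $|\Br(\calX_D)|=|\Sha(E_D)|$, and with the BSD formula of~\cite{KT}, we get
\[
  |\Sha(E_D)| = \frac{L^*(E_D,1)\, |E_D(K)_{\mathrm{tors}}|^2}{R\,\prod_v c_v}\cdot(\text{factor depending on } q \text{ and the height}).
\]
The rank equals the order of vanishing, and the leading coefficient $L^*$ is read off from $L(E_D,T)$. The height is fixed since $\deg D$ is fixed. Torsion is trivial and the $c_v$ are the uniform values found above. The remaining issue is the regulator $R$. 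Here I would use the Artin--Tate formula directly on $\calX_D$: the leading term of $P_2(q^{-s})$ at $s=1$ equals $|\Br|\cdot|\det \NS|/(q^{\alpha}|\NS_{\mathrm{tors}}|^2)$. The left side agrees for $D$ and $D'$. On the right, $|\det\NS|$ equals $R$ times the fixed trivial-lattice discriminant, so the only combination that Artin--Tate determines is $|\Sha|\cdot R$. In the rank $0$ case this immediately gives equality of $|\Sha|$. In the rank $1$ case I expect to need the paper's observation that $|\Sha|=L(1/q)$ in this family, which I would take as the intended reading. This regulator point is where I expect care is needed.
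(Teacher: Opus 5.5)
Your overall route matches the paper's. You split $H^2(\calX_D,\Ql)$ into the trivial lattice and $H^1(\Pp,j_*\mathcal F)$, note that everything except the latter is independent of $D$, and then apply Milne's Artin--Tate theorem together with $\Br(\calX_D)\cong\Sha(E_D)$.

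\textbf{The fibre at $\infty$ is computed incorrectly.} This is the step you call the main obstacle. In your model $Y^2=X^3+s^4X+s^6D(1/s)$, monicity of $D$ gives $a_6=s+d_4s^2+\cdots+d_0s^6$. So $v_\infty(a_6)=1$ and $v_\infty(\Delta)=\min(12,2)=2$, not $10$. Tate's algorithm then gives Kodaira type $\mathrm{II}$: irreducible, conductor exponent $2$, trivial component group. It is not $\mathrm{I}_0^*$. Even your own valuations $v(\Delta)=10$, $v(c_4)=4$ would give $\mathrm{II}^*$, since $\mathrm{I}_0^*$ needs $v(\Delta)=6$.

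A quick consistency check confirms type $\mathrm{II}$. From $\deg a_4=0\le 4$ and $\deg a_6=5\le 6$, $\calX_D$ is a rational elliptic surface. Hence $\chi(\calO)=1$, the minimal discriminant has total degree $12=10+2$, and $b_2=10=2+\deg L=2+8$. That leaves no room for the four extra components of an $\mathrm{I}_0^*$ fibre.

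The error matters. An $\mathrm{I}_0^*$ fibre would make the trivial lattice rank $6$. Your claim that Frobenius acts on its components independently of $D$ ``because $D$ is monic'' is then unjustified. You would also have to revisit the $\Phi_\infty$ term in Theorem~\ref{thm:groth} and the Tamagawa factor at $\infty$.

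With the correct type $\mathrm{II}$, everything collapses to the paper's statement. The trivial lattice is $\langle O,F\rangle$ and $P_2(\calX_D,T)=(1-qT)^2L(E_D,T)$. Your factor $L(E_D,T/q)$ has the twist wrong: in the paper's normalization $|\alpha_j|=q$, which is the weight of $H^2$. This slip does not affect the logic.

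\textbf{Squarefreeness of $4+27D^2$.} You are right to require it. The paper simply asserts that all finite bad fibres are $\mathrm{I}_1$, which does not follow from $D$ squarefree. A reducible fibre would add a $D$-dependent permutation factor to the trivial lattice.

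\textbf{The $|\Sha|$ step.} Your observation that Artin--Tate only determines $|\Sha|\cdot R$ is correct. The paper's proof has exactly the same scope. Its claim that $\rk\NS$ and $\disc\NS$ are constant is justified in \S\ref{sec:background} only for rank $0$, where $\NS=\langle O,F\rangle$ has $|\disc|=1$.

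Your proposed rank-$1$ patch does not work, however. In rank $1$, $L(E_D,1/q)=0$, so ``$|\Sha|=L(1/q)$'' is itself a rank-$0$ formula and tells you nothing about the regulator. The argument, yours and the paper's, proves the $|\Sha|$ conclusion for rank-$0$ curves. That is the only case the paper uses.
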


\begin{proof}
The Leray spectral sequence for $\pi\colon \calX_D \to \Pp$ gives
\[
  H^2(\calX_D, \Ql) \cong
    \Ql(-1)^{\oplus 2} \oplus H^1(\Pp, R^1\pi_*\Ql),
\]
where the first summand is the algebraic part (fiber class and zero
section) and $H^1(\Pp, R^1\pi_*\Ql)$ is the transcendental part.
The $L$-polynomial of $E_D$ is the reversed characteristic polynomial
of Frobenius on $H^1(\Pp, R^1\pi_*\Ql)$, so curves with the same
$L$-polynomial have identical Frobenius eigenvalues on this space.
Combined with the constant contributions from $H^0$, $H^4$ (and
$H^1 = H^3 = 0$), the zeta function $\zeta(\calX_D, s)$ is determined
by $L(E_D, T)$.

For $|\Sha|$: by \Cref{thm:groth}, $\Sha(E_D) \cong \Br(\calX_D)$.
By \Cref{thm:AT}, $|\Br(\calX_D)|$ depends on $\zeta(\calX_D, s)$
together with $\rk \NS$ and $\disc \NS$, both constant in our family.
\end{proof}

Equivalently, the surface trace sums
$S_n(D) = \sum_{t_0 \in \Fqn} a_{t_0}(E_D)$ satisfy
\begin{equation}\label{eq:Sn_formula}
  S_n(D) = -\sum_{j=1}^{r} \alpha_j^n
\end{equation}
by the Lefschetz trace formula, where $\alpha_j$ are the reciprocal
roots of $L(E_D, T)$.  This identity holds at \emph{every} prime $q$;
verified exhaustively at $q = 7, 11, 13$ for all $17{,}850$ rank-0
curves with $L$-degree $\geq 6$ (zero exceptions).

\subsection{Murmurations as motivation}

The observation underlying Proposition~\ref{thm:main} emerged from the study of \emph{murmurations}
over function fields.  Murmurations---oscillatory patterns in average
Frobenius traces that separate curves by analytic rank---were discovered
over $\mathbb{Q}$ by He, Lee, Oliver, and Pozdnyakov~\cite{HLOP2022},
proved for modular forms by Zubrilina~\cite{Zub}, and established for
elliptic curves over $\mathbb{Q}$ ordered by height by Sawin and
Sutherland~\cite{SS}.  In prior work~\cite{WachsQ}, we showed that
$|\Sha|$ modulates murmuration shape over~$\mathbb{Q}$, but the
mechanism remained conjectural due to the approximate nature of the
explicit formula and the unproved status of BSD for individual curves.

Over $\Fq(t)$, all three obstacles vanish: BSD is a theorem, the explicit
formula is exact ($a_v = -\sum \alpha_j^{\deg v}$), and $L$-functions are
polynomials with finitely many computable zeros.  We establish:

\begin{enumerate}[label=(\roman*)]
\item Murmurations exist over $\Fq(t)$ with the expected rank-0
  vs.\ rank-1 anti-phase structure ($z$-scores up to 256).
\item The $|\Sha|$-stratified murmuration density is an exact
  finite sum over cyclotomic $L$-polynomial types (\Cref{thm:formula}).
\item $|\Sha| = L(1/q)$ is an $L$-polynomial invariant in this family,
  a direct consequence of BSD (Proposition~\ref{thm:main}).
\item The cyclotomic structure and finite-sum formula persist for a
  non-CM family (Appendix~\ref{app:noncm}).
\item Joint cells---distinct $L$-polynomial types sharing the same
  $|\Sha|$---show that the murmuration profile resolves arithmetic
  structure strictly finer than $|\Sha|$ alone (\S\ref{sec:murmurations}).
\end{enumerate}

A key structural observation underlying these results is:

\begin{proposition}[Kronecker universality]\label{prop:kronecker}
For any elliptic curve $E$ over $\Fq(t)$, the unitarized $L$-polynomial
$L_{\mathrm{unit}}(z) = \sum_{i} (c_i / q^i)\, z^{d-i}$ has integer
coefficients and all roots on the unit circle.  By Kronecker's theorem,
$L_{\mathrm{unit}}(z)$ factors as a product of cyclotomic polynomials.
\end{proposition}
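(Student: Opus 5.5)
The plan is to reduce the statement to the single arithmetic fact that each normalized reciprocal root $\alpha_j/q$ is an algebraic integer, and then finish with Kronecker's theorem. Write $L(E,T)=\sum_{i=0}^{d} c_i T^i=\prod_{j=1}^{d}(1-\alpha_jT)$ with $c_i\in\mathbb{Z}$. Then
\[
L_{\mathrm{unit}}(z)=\sum_i (c_i/q^i)\,z^{d-i}=\prod_{j=1}^{d}\bigl(z-\alpha_j/q\bigr).
\]
By the proof of Proposition~\ref{thm:main}, the $\alpha_j$ are Frobenius eigenvalues on $H^1(\Pp,R^1\pi_*\Ql)\subset H^2(\calX,\Ql)$. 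Deligne's Riemann hypothesis therefore gives $|\iota(\alpha_j)|=q$ for every complex embedding $\iota$. Because $L(E,T)\in\mathbb{Z}[T]$, the multiset $\{\alpha_j\}$ is stable under $\mathrm{Gal}(\overline{\mathbb{Q}}/\mathbb{Q})$. Hence every conjugate of every $\beta_j=\alpha_j/q$ is again some $\beta_k$, and so has absolute value $1$.

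The key step is integrality: show that $\beta_j$ is an algebraic integer, or equivalently that $q^i\mid c_i$ for all $i$, which is the claimed integrality of the coefficients of $L_{\mathrm{unit}}$. The $\ell$-adic valuations are harmless for $\ell\neq p$, since $\alpha_j$ is a $p$-unit times an algebraic integer and $|\alpha_j|=q$. The whole issue is $p$-adic. One needs the Newton polygon of $L(E,T)$ with respect to $q$ to have all slopes equal to $1$, that is, the transcendental part of $H^2$ must be supersingular in the sense of Artin--Shioda.

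My first attempt would use the geometry of the family: for $\deg D=5$ the surface $\calX_D$ is a K3 surface. I would invoke the Tate conjecture for K3 surfaces in odd characteristic (Nygaard--Ogus, Madapusi Pera, Charles), together with Artin's dichotomy. If the geometric Picard number is $22$, then all of $H^2$ is spanned by algebraic classes, which become $\mathrm{Frob}$-invariant after a finite extension $\mathbb{F}_{q^m}$. Then $\alpha_j^m=q^m$, so $\beta_j$ is a root of unity and in particular an integer.

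I expect this step to be the genuine obstacle. For an arbitrary elliptic curve over $\Fq(t)$ it is false: an ordinary K3 or other non-supersingular elliptic surface has some $c_i$ that are $p$-adic units times small powers of $q$, so that $c_i/q^i\notin\mathbb{Z}$. The proof would therefore need to be restricted, or the hypothesis made explicit. Either one proves geometric supersingularity of $\calX_D$ for this family, for instance by a Fermat-type or unirationality argument in the style of Shioda, or one checks $q^i\mid c_i$ directly in the computed data. I would state the proposition under the hypothesis ``the $\beta_j$ are algebraic integers'' and flag the word ``any'' as requiring this restriction.

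Granting integrality, the conclusion is routine. $L_{\mathrm{unit}}(z)$ is monic with coefficients that are symmetric functions of algebraic integers and that are rational, hence lie in $\mathbb{Z}$. All of its roots have all conjugates on the unit circle. Kronecker's theorem then makes each $\beta_j$ a root of unity. A monic integer polynomial whose roots are all roots of unity is a product of the minimal polynomials of those roots, which are cyclotomic polynomials $\Phi_n$. Multiplicities are handled by factoring $L_{\mathrm{unit}}$ over $\mathbb{Q}$ into irreducibles, each of which must be some $\Phi_n$.
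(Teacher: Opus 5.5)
Your diagnosis is correct, and it identifies exactly where the paper's own proof fails. The paper derives $q^i\mid c_i$ from ``integrality on a $\mathbb{Z}_\ell$-lattice'' together with the functional equation. But $\ell$-adic integrality for $\ell\neq p$ says nothing at $p$. And $c_{r-i}=\varepsilon q^{r-2i}c_i$ only transfers divisibility from the lower half of the coefficients to the upper half, so nothing forces, say, $q\mid c_1$.

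The statement with ``any'' is in fact false. Take $E\colon (t^3+t+1)\,y^2=x^3+x+1$ over $\mathbb{F}_5(t)$. This is the quadratic twist of the ordinary curve $E_0\colon y^2=x^3+x+1$ (with $a(E_0)=-3$) by the irreducible cubic $t^3+t+1$, and the double cover $s^2=t^3+t+1$ is again $E_0$. Then $L(E,T)=\det\bigl(1-\Frob\,T\mid H^1(E_0)\otimes H^1(E_0)\bigr)=(1-5T)^2(1+T+25T^2)$. Hence $L_{\mathrm{unit}}(z)=(z-1)^2\bigl(z^2+\tfrac{1}{5}z+1\bigr)$, and two of its roots lie on $|z|=1$ without being roots of unity. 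So you are right to drop ``any''. Your conditional argument (integrality, Galois stability of the roots, Kronecker) matches the final step of the paper's proof and is fine.

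The error in your proposal is the repair you sketch for the paper's family: $\calX_D$ is not a K3 surface. With $a_4=1$ and $a_6=D$ of degree $5\le 6$, the minimal model has $\chi(\calO_{\calX_D})=1$, so it is a \emph{rational} elliptic surface. Its invariants are consistent with this:
\begin{itemize}
\item $b_2=10$;
\item the singular fibres are ten of type $\mathrm{I}_1$ plus one of type $\mathrm{II}$ at $t=\infty$, with Euler numbers summing to $10+2=12$;
\item $r=b_2-2=8$, matching the paper's degree-$8$ $L$-polynomials.
\end{itemize}
The Tate-conjecture and Artin-dichotomy route therefore starts from a false premise. It would not help even for a genuine K3 surface, since supersingularity is the exceptional case there.

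However, your own Picard-number argument, run with $\rho(\overline{\calX}_D)=b_2=10$ in place of $22$, works verbatim and unconditionally. For a rational surface $H^2(\overline{\calX}_D,\Ql)=\NS(\overline{\calX}_D)\otimes\Ql(-1)$. Frobenius acts on the finitely generated lattice $\NS(\overline{\calX}_D)$ through a finite group, so every $\alpha_j$ is $q$ times a root of unity. This gives $q^i\mid c_i$ and the cyclotomic factorization directly, without even needing Kronecker. It is also the real reason all of the paper's data are cyclotomic: the ``non-CM'' family $y^2=x^3+3x+2+D(t)$ with $\deg D\le 4$ is rational as well.
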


This is forced by the Weil conjectures (which give $q^i \mid c_i$ for
the $L$-polynomial coefficients of weight~2) and is independent of CM.
Verified for 292 distinct $L$-polynomials across CM and non-CM families
at $q = 7, 11, 13$ with zero exceptions.

\subsection{Outline}

Section~\ref{sec:background} recalls the necessary background on elliptic
curves over function fields, BSD, and the Brauer group of elliptic
surfaces.  Section~\ref{sec:computation} describes the computational
methods.  Section~\ref{sec:murmurations} establishes murmurations over
$\Fq(t)$ and the $|\Sha|$ stratification.  Section~\ref{sec:formula}
proves the finite-sum murmuration formula.  Section~\ref{sec:sha}
establishes $|\Sha|$ as an $L$-polynomial invariant and verifies the
surface trace formula exhaustively.
Section~\ref{sec:discussion} discusses open problems.


\section{Background and notation}\label{sec:background}

\subsection{Elliptic curves over \texorpdfstring{$\Fq(t)$}{Fq(t)}}

Let $q$ be a prime power with $q > 3$.  We study the one-parameter family
\begin{equation}\label{eq:family}
  E_D\colon y^2 = x^3 + x + D(t),
\end{equation}
where $D \in \Fq[t]$ is monic squarefree.  The base curve
$E_0\colon y^2 = x^3 + x$ has $j$-invariant~$1728$ and CM by
$\mathbb{Z}[i]$.  The discriminant of the Weierstrass equation is
\begin{equation}\label{eq:disc}
  \Delta(t) = -16(4 + 27D(t)^2),
\end{equation}
and $E_D$ has bad reduction precisely at the roots of
$\Delta_0(t) = 4 + 27D(t)^2$.  Since $c_4 = -48 \neq 0$, the reduction
is multiplicative at every bad place: all singular fibers are of Kodaira
type~$\mathrm{I}_1$ (nodal rational curves with one irreducible component).

For $D$ of degree $n$, the polynomial $\Delta_0$ has degree $2n$, giving
$2n$ bad places (counted with degree).  The $L$-polynomial has degree
$r \leq 2n - 2$; for $n = 5$, this gives $r \leq 8$.

\subsection{The \texorpdfstring{$L$}{L}-polynomial}

The $L$-function of $E_D$ over $\Fq(t)$ is a polynomial
\[
  L(E_D, T) = \sum_{i=0}^{r} c_i T^i, \quad c_0 = 1,
\]
with $c_r = \varepsilon\, q^r$ where $\varepsilon \in \{+1, -1\}$ is the
root number.  The functional equation reads
\begin{equation}\label{eq:FE}
  c_{r-i} = \varepsilon\, q^{r-2i}\, c_i \quad \text{for all } 0 \leq i \leq r.
\end{equation}
The Riemann hypothesis (proved by Deligne) asserts that every reciprocal
root $\alpha_j$ of $L(E_D, T)$ satisfies $|\alpha_j| = q$.

At a place $v$ of $\Pp$ of degree $d = \deg(v)$ where $E_D$ has good
reduction, the Frobenius trace is
\begin{equation}\label{eq:trace}
  a_v(E_D) = -\sum_{j=1}^{r} \alpha_j^{d}.
\end{equation}
This identity holds only at good places; at a place of multiplicative
reduction ($\mathrm{I}_1$), the trace $a_v \in \{+1, -1\}$ records
whether the node is split or non-split, and is \emph{not} determined
by the $L$-polynomial.

\subsection{BSD over function fields}

By the theorem of Kato--Trihan~\cite{KT}, the Birch and Swinnerton-Dyer
conjecture holds for elliptic curves over function fields over finite
fields.  For a rank-0 curve $E_D$ with $\deg D = 5$:
\begin{itemize}
\item The Mordell--Weil group $E_D(\Fq(t))$ has rank~0.
\item The global torsion is trivial: a $2$-torsion point $(x_0, 0)$
  requires $x_0^3 + x_0 + D(t) = 0$ with $x_0 \in \Fq(t)$.
  Writing $x_0 = f/g$ with $\gcd(f,g) = 1$ and clearing denominators
  gives $f^3 + fg^2 + D(t)g^3 = 0$.  Since $D$ is monic of degree~5,
  comparing leading terms forces $g \mid 1$, so $x_0 \in \Fq[t]$.
  Then $\deg(x_0^3) = 3\deg x_0$ must equal $\deg D = 5$, which has
  no integer solution.
\item All Tamagawa numbers $c_v = 1$ (since all bad fibers are
  $\mathrm{I}_1$ with trivial component group).
\item The regulator $R = 1$ (rank~0).
\end{itemize}
With trivial torsion and all $c_v = 1$, the BSD formula simplifies to
\begin{equation}\label{eq:bsd}
  |\Sha(E_D)| = L(E_D, 1/q).
\end{equation}
(In the function field BSD formula of Kato--Trihan~\cite{KT},
the period is absorbed into the normalization of $L(E_D, T)$
as the Euler product over~$\Fq(t)$;
see Ulmer~\cite[Thm.~5.2]{Ulmer} for details.)
Since $L(1/q)$ depends only on the $L$-polynomial, $|\Sha|$ is an
$L$-polynomial invariant---confirming Proposition~\ref{thm:main} from the BSD
side.  Explicitly, $|\Sha| = L_{\mathrm{unit}}(1) = \prod \Phi_n(1)^{e_n}$
where $L_{\mathrm{unit}} = \prod \Phi_n^{e_n}$ is the cyclotomic
factorization.  The values $\Phi_n(1)$ are: $\Phi_1(1) = 0$,
$\Phi_p(1) = p$ for primes $p$, and $\Phi_n(1) = 1$ when $n$ has
two or more distinct prime factors.

\subsection{The Brauer group and the Artin--Tate conjecture}

Let $\pi\colon \calX_D \to \Pp$ be the minimal regular model of $E_D$.
This is a smooth projective surface over $\Fq$.

\begin{theorem}[Grothendieck {\cite[Prop.~4.4]{Groth}}]\label{thm:groth}
For an elliptic surface $\pi\colon \calX \to C$ with a section over a
finite field $k$, there is an exact sequence
\[
  0 \to \Br(k) \to \Br(\calX) \to \Sha(E/K)
    \to \bigoplus_{v} H^1(\kappa(v), \Phi_v),
\]
where $\Phi_v$ is the component group of the N\'eron model fiber at $v$.
\end{theorem}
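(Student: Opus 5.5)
The plan is to derive the exact sequence of \Cref{thm:groth} from the Leray spectral sequence for $\pi\colon \calX \to C$ with coefficients in $\mathbb{G}_m$, in the étale topology. Since $\pi$ has a section and geometrically integral fibers, $\pi_*\mathbb{G}_{m,\calX} = \mathbb{G}_{m,C}$. The low-degree terms of
\[
  E_2^{p,q} = H^p(C, R^q\pi_*\mathbb{G}_m) \Rightarrow H^{p+q}(\calX, \mathbb{G}_m)
\]
give an exact sequence
\[
  \Br(C) \to \ker\bigl(\Br(\calX) \to H^0(C, R^2\pi_*\mathbb{G}_m)\bigr) \to H^1(C, R^1\pi_*\mathbb{G}_m) \to H^3(C,\mathbb{G}_m).
\]
The section splits the edge maps, so $\Br(C) \to \Br(\calX)$ is injective. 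For a smooth projective curve over a finite field, class field theory gives $\Br(C) = 0$. Since $\Br(k) = 0$ as well, the leftmost term $\Br(k)$ of the claimed sequence is harmless. Grothendieck's theorem that $R^2\pi_*\mathbb{G}_m = 0$ for relative dimension one with a section removes the kernel condition.

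The next step is to identify $R^1\pi_*\mathbb{G}_m$ with the relative Picard sheaf. Using the section, I would write it as $\mathbb{Z} \oplus \mathcal{P}$. Here $\mathcal{P}$ is the sheaf of line bundles of total degree zero, which is represented by an extension of the Néron model $\mathcal{E}$ of $E/K$ by the "fiber components" sheaf. More precisely, there is an exact sequence $0 \to \mathcal{E}^0$-type pieces $\to \mathcal{P} \to \dots$. The cleanest route is the comparison $j^*$ with the generic point: $\mathcal{P}$ agrees with $j_*E$ up to the component groups $\Phi_v$ supported at the bad points. Taking $H^1$ and using $H^1(C,\mathbb{Z}) = 0$ (since $C$ is normal), I get
\[
  H^1(C, R^1\pi_*\mathbb{G}_m) \cong H^1(C, \mathcal{E}),
\]
with $\mathcal{E} = j_*E$ the Néron model.

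Finally, the sequence $0 \to \mathcal{E}^0 \to \mathcal{E} \to \bigoplus_v i_{v*}\Phi_v \to 0$ and the localization sequence comparing $H^1(C,\mathcal{E})$ to $H^1(K,E)$ identify $H^1(C,\mathcal{E})$ inside $H^1(K,E)$. For $\mathcal{E}$, the local terms vanish by Lang's theorem on the connected parts, leaving exactly the classes that are everywhere locally trivial up to component groups. This produces the map $\Br(\calX) \to \Sha(E/K)$ with cokernel detected in $\bigoplus_v H^1(\kappa(v), \Phi_v)$.

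The main obstacle is this last identification: the precise comparison between $H^1(C,\mathcal{E})$ and $\Sha$, including the local-cohomology terms at bad places and the vanishing of $H^3(C,\mathbb{G}_m)$-contributions. I would follow Grothendieck's argument in \cite[Prop.~4.4]{Groth}, citing it for this step rather than redoing it. In our family all $\Phi_v$ are trivial, since the fibers are $\mathrm{I}_1$, so only the isomorphism $\Br(\calX)\cong\Sha$ is actually needed.
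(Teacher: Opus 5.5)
\paragraph{Where the argument fails.} The paper gives no argument here beyond the citation, and your Leray set-up is Grothendieck's own route. Your first reductions are fine: $\Br(C)=0$, $R^2\pi_*\mathbb{G}_m=0$, and $R^1\pi_*\mathbb{G}_m=\mathbb{Z}\oplus\mathcal P$. The section makes every $E_2^{p,0}\to H^p(\calX,\mathbb{G}_m)$ split injective, which already kills $d_2\colon H^1(C,R^1\pi_*\mathbb{G}_m)\to H^3(C,\mathbb{G}_m)$, so $H^3$ is not an obstacle and you get $\Br(\calX)\cong H^1(C,\mathcal P)$.

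The genuine gap is the next claim, $H^1(C,\mathcal P)\cong H^1(C,\mathcal E)$. The sheaves $\mathcal P$ and $\mathcal E=j_*E$ do not differ by the $\Phi_v$. Instead there is an exact sequence $0\to\mathcal F\to\mathcal P\to\mathcal E\to 0$ with $\mathcal F=\bigoplus_v i_{v*}\mathcal F_v$. Here $\mathcal F_v$ is the lattice of divisors supported on the geometric fiber at $v$, modulo the fiber itself; $\Phi_v$ is only the cokernel of the intersection map from $\mathcal F_v$ to degree-zero multidegrees. The vanishing of $H^1(C,\mathbb{Z})$ disposes of the degree summand, not of $\mathcal F$.

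The section does give $H^1(C,\mathcal F)=0$, so $H^1(C,\mathcal P)\hookrightarrow H^1(C,\mathcal E)$. But $H^2(C,\mathcal F)=\bigoplus_v H^2(\kappa(v),\mathcal F_v)$ is nonzero as soon as some fiber has two Galois orbits of components. The connecting map $H^1(C,\mathcal E)\to H^2(C,\mathcal F)$ has $v$-component injective on the local image $H^1(\mathcal O_v^h,\mathcal E)=H^1(\kappa(v),\Phi_v)$. The local input is $H^1(\kappa(v),\mathcal P_{\bar v})=0$, which is where Lang's theorem and the section enter. Hence $H^1(C,\mathcal P)$ is exactly $\Sha(E/K)$, not $H^1(C,\mathcal E)$.

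\paragraph{The last step runs the wrong way.} $H^1(C,\mathcal E)$ is the group of everywhere-unramified classes, and localization gives
$0\to\Sha(E/K)\to H^1(C,\mathcal E)\to\bigoplus_v H^1(\kappa(v),\Phi_v)$.
There is no natural map $H^1(C,\mathcal E)\to\Sha$. With your identification you would obtain $\Sha\hookrightarrow\Br(\calX)$ with cokernel inside $\bigoplus_v H^1(\kappa(v),\Phi_v)$. That is not the stated sequence, and it is false in general. By global duality, $H^1(C,\mathcal E)/\Sha$ is dual to the cokernel of $E(K)\to\bigoplus_v\Phi_v(\kappa(v))$, which is nonzero e.g.\ when $E(K)=0$ and some Tamagawa number exceeds $1$.

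What the argument really yields, for a surface with a section over a finite field, is $\Br(\calX)\cong\Sha(E/K)$ outright. So the last map in the displayed sequence is zero, and the $\Phi_v$-term belongs to $H^1(C,\mathcal E)/\Sha$ rather than to $\Sha/\mathrm{Im}\,\Br(\calX)$.

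\paragraph{The paper's family.} Every bad fiber is $\mathrm{I}_1$, so $\mathcal F=0$, $\mathcal P=\mathcal E=\mathcal E^0$, and $H^1(C,\mathcal E)=\Sha$ by Lang. In that case your argument is complete and proves exactly the isomorphism $\Br(\calX_D)\cong\Sha(E_D)$ that the paper uses.
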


In our setting, $\Br(\Fq) = 0$ (finite fields have trivial Brauer group),
and all singular fibers are of type~$\mathrm{I}_1$ with trivial component
group $\Phi_v = 0$, so both outer terms vanish and we obtain
$\Br(\calX_D) \cong \Sha(E_D/\Fq(t))$.

\begin{theorem}[Artin--Tate; Milne {\cite{Milne}}]\label{thm:AT}
For a smooth projective surface $X$ over $\Fq$ with $\rho = \rk \NS(X)$
and $P_2(X, T) = \det(1 - \Frob \cdot T \mid H^2(X, \Ql))$, the Brauer
group $\Br(X)$ is finite and
\[
  \lim_{T \to 1/q} \frac{P_2(X, T)}{(1 - qT)^{\rho}}
  = \frac{|\Br(X)| \cdot |\disc \NS(X)|}{q^{\chi(\calO_X)} \cdot |\NS(X)_{\mathrm{tors}}|^2}.
\]
In particular, since $P_2(X, T)$ is determined by $\zeta(X, s)$, the
quantity $|\Br(X)|$ is determined by $\zeta(X, s)$ together with
$\rho$ and $|\disc \NS(X)|$.
\end{theorem}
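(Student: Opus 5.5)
The plan follows Milne's approach. It has three parts: reduce the statement to a finiteness input, then compute the leading term of $P_2(X,T)$ at $T=1/q$ one prime $\ell$ at a time, and finally assemble the $\ell$-parts into a single rational identity. Throughout I write $\bar X = X\times_{\Fq}\bar{\mathbb F}_q$, $G=\mathrm{Gal}(\bar{\mathbb F}_q/\Fq)\cong\hat{\mathbb Z}$ with generator $\Frob$, and $H^i=H^i(\bar X,\Ql(1))$.

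\emph{Step 1: the Tate conjecture and finiteness.} By Tate's equivalences, the following conditions are equivalent:
\begin{enumerate}[label=(\alph*)]
\item the $\ell$-primary part $\Br(X)[\ell^\infty]$ is finite for one prime $\ell$;
\item $\rho = \rk\NS(X)$ equals the multiplicity of $q$ as a reciprocal root of $P_2(X,T)$;
\item the map $\NS(X)\otimes\Ql\to (H^2)^{G}$ is surjective, and $\Frob$ acts semisimply on the generalized $1$-eigenspace of $H^2$.
\end{enumerate}
Concretely, the Kummer sequences $0\to \NS(X)\otimes\mathbb Z_\ell\to H^2(X,\mathbb Z_\ell(1))\to T_\ell\Br(X)\to 0$ show that $V_\ell\Br(X)=0$ exactly when the cycle class map is surjective. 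Milne then shows that (a) for one $\ell$ implies (a) for all $\ell$, including $\ell=p$, by using flat cohomology. This step is the main obstacle. The unconditional finiteness asserted in the statement is precisely the Tate conjecture for divisors on $X$, which is not known for arbitrary surfaces. I will therefore prove the displayed formula under the Tate conjecture, as Milne does. I then note that for the surfaces $\calX_D$ it actually holds: an elliptic surface over $\Pp$ with $\chi(\calO_X)\le 2$ is rational or K3, where Tate is known. Alternatively, \Cref{thm:groth} combined with Kato--Trihan finiteness of $\Sha$ yields (a).

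\emph{Step 2: the leading term, one prime $\ell\ne p$ at a time.} Under (c), the factor $(1-qT)^\rho$ exactly removes the eigenvalue $q$ from $P_2(X,T)$. The limit is then $\prod_{\alpha\ne q}(1-\alpha/q)$, which equals, up to units, the value of $\det(1-\Frob)$ restricted to the complement of the $1$-eigenspace of $H^2$. I will compute its $\ell$-adic valuation by the standard lemma: for an endomorphism $\phi$ of a finitely generated $\mathbb Z_\ell$-module with finite kernel and cokernel on the relevant piece, $|\det\phi|_\ell^{-1}$ equals the ratio of the orders of the cokernel and the kernel. This is applied to the Hochschild--Serre sequences
\[
0\to H^{i-1}(\bar X,\mathbb Z_\ell(1))_G\to H^i(X,\mathbb Z_\ell(1))\to H^i(\bar X,\mathbb Z_\ell(1))^G\to 0,
\]
taking $i=1,2,3$. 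The resulting $\ell$-adic Euler characteristic is then identified with three pieces of arithmetic data. The discriminant of the cup-product pairing on $\NS(X)\otimes\mathbb Z_\ell$ contributes $|\disc\NS(X)|_\ell$. The finite group $\Br(X)[\ell^\infty]$ enters through the Kummer sequence. The torsion of $\NS(X)$ enters twice, once from $H^2_{\mathrm{tors}}$ and once dually from $H^3$ by Poincaré duality, which produces the square $|\NS(X)_{\mathrm{tors}}|^2$.

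\emph{Step 3: the prime $p$, and assembly.} For $\ell=p$ I repeat Step 2 with flat cohomology $H^i(X_{\mathrm{fl}},\mu_{p^n})$ in place of étale $\ell$-adic cohomology, using crystalline or de Rham--Witt cohomology to relate it to the slopes of $\Frob$. The new feature is that the slopes in $[0,1)$ contribute a power of $q$. Computing this power via Hodge numbers yields the factor $q^{\chi(\calO_X)}$, up to the sign conventions in $\alpha(X)=\chi(\calO_X)-1+\dim\mathrm{PicVar}$, and this matches the normalization in the statement. I expect this $p$-adic bookkeeping to be the technical heart of the proof once finiteness is granted.

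Finally, both sides of the formula are positive rationals. Their $\ell$-adic absolute values agree for every $\ell$, including $\ell=p$, so the two sides are equal. The closing remark of the statement is then immediate: $P_2(X,T)$ can be read off from $\zeta(X,s)$, so $|\Br(X)|$ is determined by $\zeta(X,s)$ together with $\rho$ and $|\disc\NS(X)|$. This uses that $\chi(\calO_X)$ is itself determined by the zeta function and that $\NS(X)_{\mathrm{tors}}$ is dual to the torsion in $H^3$.
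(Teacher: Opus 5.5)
The paper gives no argument beyond citing Milne, and your outline (Tate's equivalences, Tate's Hochschild--Serre computation for $\ell\neq p$, Milne's flat-cohomology computation at $p$) is that argument. You are also right that unconditional finiteness of $\Br(X)$ is the Tate conjecture. It does hold for the $\calX_D$, which are rational: $\deg D=5\le 6$ gives $\chi(\calO_{\calX_D})=1$.

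The genuine gap is in Step~3. The slope computation at $p$ does not produce $q^{\chi(\calO_X)}$. It produces $q^{\alpha(X)}$, where $\alpha(X)=\chi(\calO_X)-1+\dim\mathrm{PicVar}(X)$; this equals $p_g(X)$ when $\mathrm{Pic}_X$ is reduced. The discrepancy $q^{\dim\mathrm{PicVar}(X)-1}$ is not a sign convention, and the displayed formula is false as written. For $X=\mathbb{P}^2$ the left side is $1$ while the stated right side is $q^{-1}$; for a K3 surface the denominator should be $q$, not $q^2$. It even fails for the paper's own surfaces. For rank-$0$ $\calX_D$ one has $\rho=2$, $P_2(\calX_D,T)=(1-qT)^2L(E_D,T)$, $|\disc\NS|=1$, $\NS$ torsion-free and $\alpha=0$. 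The correct formula then gives $|\Br(\calX_D)|=L(E_D,1/q)$, matching \eqref{eq:bsd}. The stated one would force $|\Br(\calX_D)|=q\,L(E_D,1/q)$, e.g.\ $11$ in Example~\ref{ex:worked}, which is not even a square. So your sentence claiming the bookkeeping ``matches the normalization in the statement'' is exactly the step that cannot be carried out; a correct proof must replace $\chi(\calO_X)$ by $\alpha(X)$.

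Your justification of the closing clause also fails. The zeta function sees only $\Ql$-cohomology, so it determines neither of the following:
\begin{itemize}
\item $|\NS(X)_{\mathrm{tors}}|$: duality with the torsion of $H^3(\bar X,\mathbb{Z}_\ell)$ does not make that torsion visible to $\zeta$.
\item $\chi(\calO_X)$: Noether's formula involves $K_X^2$. A supersingular K3 surface and $\mathbb{P}^2$ blown up in $21$ rational points can both have $P_2=(1-qT)^{22}$ and $b_1=b_3=0$, yet $\chi(\calO_X)$ is $2$ versus $1$.
\end{itemize}
The clause is in fact false as stated. In odd characteristic, take an Enriques surface and $\mathbb{P}^2$ blown up in $9$ rational points, each with all of $\NS$ defined over $\Fq$. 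They share $\zeta$ (with $P_2=(1-qT)^{10}$), $\rho=10$ and $|\disc\NS|=1$, but $|\NS_{\mathrm{tors}}|$ is $2$ versus $1$, so the formula gives $|\Br|=4$ versus $1$. What is true is that $|\Br(X)|$ is determined by $\zeta$, $\rho$, $|\disc\NS(X)|$, $|\NS(X)_{\mathrm{tors}}|$ and $\alpha(X)$. For the $\calX_D$ the last two are constant, which is all the paper uses.

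Finally, your alternative route to finiteness via Kato--Trihan is circular. They derive BSD \emph{from} finiteness of an $\ell$-primary part of the Tate--Shafarevich group and do not prove that finiteness. The rationality argument is the one that works.
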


For our family, the Shioda--Tate formula gives
$\rk \NS(\calX_D) = 2 + \rk E_D(\Fq(t)) + \sum_v (m_v - 1)$, where
$m_v$ is the number of components of the fiber at $v$.  For rank-0
curves with all $\mathrm{I}_1$ fibers ($m_v = 1$), this gives
$\rk \NS = 2$.  Since
$\NS(\calX_D)$ is spanned by the fiber class and the zero section,
it is torsion-free, so $|\NS(\calX_D)_{\mathrm{tors}}|^2 = 1$ in
the Artin--Tate formula.  The discriminant $\disc \NS$ is determined
by the intersection pairing on these two classes, which is constant
across our family.

Proposition~\ref{thm:main} now follows: if the point counts $|\calX_D(\Fqn)|$ agree
for all $n$, then $\zeta(\calX_D, s) = \zeta(\calX_{D'}, s)$, hence
$|\Br(\calX_D)| = |\Br(\calX_{D'})|$ by \Cref{thm:AT} (here using
that $\rk \NS$ and $\disc \NS$ are constant in this family;
cf.~\S\ref{sec:background}), and
$|\Sha(E_D)| = |\Sha(E_{D'})|$ by \Cref{thm:groth}.


\section{Computational methods}\label{sec:computation}

\subsection{Enumeration and point counting}

For each prime $q \in \{7, 11, 13\}$ and each monic squarefree
polynomial $D \in \Fq[t]$ of degree $n \leq 5$, we compute the
$L$-polynomial $L(E_D, T)$ and the BSD invariants of $E_D$.

The Frobenius traces $a_v$ at places of degree $d$ are computed by
precomputing the trace function $c \mapsto \tr(\Frob \mid E_c/\Fq^d)$
for all $c \in \mathbb{F}_{q^d}$, then evaluating $D(t_0)$ at each
$t_0$ of degree dividing $d$.  The $L$-polynomial coefficients are
recovered from the power sums $S_d = \sum_{\deg v = d} \deg(v) \cdot a_v$
via Newton's identities.

\subsection{Functional equation prediction}

The functional equation~\eqref{eq:FE} determines the top-half
coefficients $c_{r-i}$ from the bottom-half $c_i$ for $i \leq \lfloor r/2 \rfloor$.
For degree-8 $L$-polynomials, this halves the required computation:
traces at places of degree $\leq 4$ suffice, rather than degree $\leq 8$.
All predictions are validated by verifying the Riemann hypothesis:
every reciprocal root satisfies $|\alpha_j| = q$ to machine precision.

\subsection{Data summary}

\begin{table}[ht]
\centering
\begin{tabular}{@{}cccccc@{}}
\toprule
$q$ & $\deg D$ & Curves & $L$-poly types & Rank-0, $L$-deg $\geq 6$ & Time \\
\midrule
7  & $\leq 4$ & 2,401    & 38  & 924    & 13 min \\
11 & $\leq 5$ & 161,051  & 103 & 8,107  & 7 hr \\
13 & $\leq 5$ & 371,293  & 147 & 9,561  & 21 hr \\
\bottomrule
\end{tabular}
\caption{Computation summary for the CM family $y^2 = x^3 + x + D(t)$.}
\label{tab:data}
\end{table}

\begin{example}[A worked computation]\label{ex:worked}
At $q = 11$, take $D(t) = t^5 + 4$.  The discriminant is
$\Delta_0(t) = 4 + 27(t^5 + 4)^2$, of degree~$10$, giving $10$ bad
places (all $\mathrm{I}_1$).  Frobenius traces at the $11$ degree-1
places yield $S_1 = 3 + (-6) + 1 + (-6) + (-6) + (-6) + 1 + 1 + 1 + (-6) + 1 = -22$.
The $L$-polynomial, recovered from traces at degrees $1$--$4$ via Newton's identities
with functional equation prediction, is
\[
  L(T) = 11^8 T^8 - 2 \cdot 11^7 T^7 + 3 \cdot 11^6 T^6 - 4 \cdot 11^5 T^5
    + 5 \cdot 11^4 T^4 - \cdots + 1,
\]
with $\varepsilon = +1$, $r = 8$, and $L(1/11) = 1$.  The unitarized
polynomial is $L_{\mathrm{unit}}(z) = \Phi_{10}(z)^2 = (z^4 - z^3 + z^2 - z + 1)^2$.
Since $\Phi_{10}(1) = 1$, the BSD formula gives $|\Sha| = 1$.
\end{example}

All computations use SageMath~10.8+.  The non-CM family
$y^2 = x^3 + 3x + 2 + D(t)$ was also computed at $q = 7$ (343 curves)
and $q = 11$ (14,641 curves) as a verification that the cyclotomic
structure is independent of CM (Appendix~\ref{app:noncm}).


\section{Murmurations over function fields}\label{sec:murmurations}

\subsection{Existence of murmurations}

For each place degree $d \in \{1, 2, 3, 4\}$ and rank $s \in \{0, 1\}$,
we compute the mean Frobenius trace
\[
  \overline{a}_d^{(s)} = \frac{1}{|\mathcal{C}_s|}
    \sum_{E_D \in \mathcal{C}_s} \left(
    \frac{1}{\#\{v : \deg v = d\}} \sum_{\deg v = d} a_v(E_D) \right),
\]
where $\mathcal{C}_s$ denotes the set of rank-$s$ curves with $\deg D = 5$.

At $q = 11$, the rank-0 and rank-1 mean traces exhibit the characteristic
anti-phase oscillation discovered by He--Lee--Oliver--Pozdnyakov~\cite{HLOP2022}
over $\mathbb{Q}$ (Figure~\ref{fig:murmuration}).  The separation is
highly significant: $z$-scores of $131.7$ ($d = 1$), $125.1$ ($d = 2$),
$70.2$ ($d = 3$), and $194.7$ ($d = 4$) for 62,766 rank-0 and
65,219 rank-1 curves.

\begin{figure}[ht]
\centering
\includegraphics[width=0.8\textwidth]{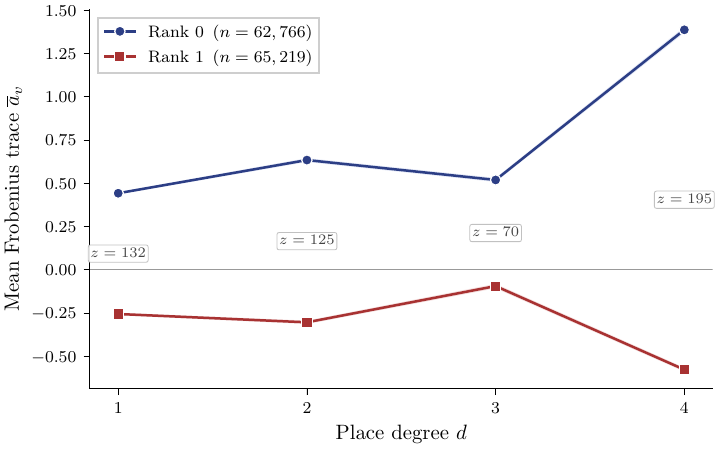}
\caption{Murmurations over $\Fq(t)$ at $q = 11$, $\deg D = 5$.
  Mean Frobenius traces by rank at place degrees $d = 1, \ldots, 4$.
  Shaded: $\pm 2\,\mathrm{SE}$.}
\label{fig:murmuration}
\end{figure}

\subsection{Sha stratification}

Within fixed rank~0, curves with different $|\Sha|$ values have
significantly different trace profiles.  This is the function-field
analogue of the $|\Sha|$ modulation discovered over
$\mathbb{Q}$ in~\cite{WachsQ}, but here it is exact rather than
statistical.

The $L$-polynomial determines all Frobenius traces at good places via
\eqref{eq:trace}.  Since curves with the same $L$-polynomial have
identical trace profiles, the $|\Sha|$ modulation of murmurations
can only arise from the \emph{composition} of $L$-polynomial types
within each $|\Sha|$ stratum.

\subsection{The joint cell decomposition}

We organize the $|\Sha|$ modulation by introducing a $2 \times 2$
decomposition of rank-0 curves by $L$-polynomial structure:

\vspace{8pt}
\begin{center}
\renewcommand{\arraystretch}{1.5}
\begin{tabular}{@{}l|cc@{}}
 & Same $L$-poly & Different $L$-poly \\ \midrule
Same $|\Sha|$ & Baseline & Zero displacement only \\
Different $|\Sha|$ & Local factors only & \textbf{Joint cell} \\
\end{tabular}
\renewcommand{\arraystretch}{1.0}
\end{center}
\vspace{8pt}

In our family, the ``local factors only'' cell is empty by
Proposition~\ref{thm:main}: since $|\Sha| = L(1/q)$ is an
$L$-polynomial invariant, curves sharing the same $L$-polynomial
necessarily share the same $|\Sha|$.  The modulation of murmurations
by $|\Sha|$ must therefore be entirely between distinct $L$-polynomial
types.

The ``zero displacement only'' cell---same $|\Sha|$, different
$L$-polynomial zeros---is non-empty and carries a sharp empirical
signal.  At $q = 11$, $L$-degree~8: the $|\Sha| = 1$ stratum contains
four distinct types ($\Phi_6^4$, $\Phi_6^2 \cdot \Phi_{12}$,
$\Phi_{10}^2$, $\Phi_{12}^2$) with surface trace sums
$S_1 \in \{-44, -22, -22, 0\}$ and
$S_2 \in \{484, 0, 242, -484\}$;
the $|\Sha| = 4$ stratum contains two types
($\Phi_8^2$, $\Phi_2^2 \cdot \Phi_6 \cdot \Phi_{12}$) with
$S_1 \in \{0, 11\}$ and $S_2 \in \{0, -363\}$.
Joint cells also occur at $|\Sha| = 9$ and $|\Sha| = 16$
(Table~\ref{tab:surface}).
Within each stratum, these are curves with identical $|\Sha|$ and
identical $L(1/q)$ but genuinely different zero distributions,
producing measurably different mean trace profiles at all place degrees.
The murmuration profile therefore carries arithmetic information
strictly finer than $|\Sha|$ alone: it resolves the $L$-polynomial type
within each $|\Sha|$ stratum.

A \emph{joint cell} is a set of $L$-polynomial classes sharing the same
$|\Sha| = L(1/q)$ but having different $L$-polynomial zeros,
contributing different trace profiles to the $|\Sha|$-stratified mean.
The $|\Sha|$ modulation of the full stratified murmuration density
(\Cref{thm:formula}) is governed by the composition of types across
all cells, not by $|\Sha|$ directly.  The joint cells are where the
composition of the $|\Sha| = s$ stratum departs most visibly from a
single-type picture.
\newpage

\section{The finite-sum murmuration formula}\label{sec:formula}

\subsection{Cyclotomic structure}

\begin{proof}[Proof of \Cref{prop:kronecker}]
Write $L(E_D, T) = \sum c_i T^i$ with $c_0 = 1$.
The $L$-polynomial is the reversed characteristic polynomial of
Frobenius acting on the $\ell$-adic cohomology group
$H^1(\Pp, R^1\pi_*\Ql)$, which is a free $\mathbb{Z}_\ell$-module
of rank~$r$.  The Frobenius eigenvalues $\alpha_1, \ldots, \alpha_r$
are $q$-Weil integers of weight~$2$ (i.e., $|\alpha_j| = q$ under
every complex embedding).  The divisibility $q^i \mid c_i$ is a
standard consequence of the integrality of the characteristic
polynomial on a $\mathbb{Z}_\ell$-lattice combined with the functional
equation~\eqref{eq:FE}: from $c_r = \varepsilon\, q^r$ and
$c_{r-i} = \varepsilon\, q^{r-2i}\, c_i$, the divisibilities propagate;
see \cite[\S3]{Ulmer} for the function-field setting and
\cite[Ch.~21]{KS} for the general framework.

The unitarized polynomial
$L_{\mathrm{unit}}(z) = \sum_{i=0}^{r} (c_{r-i}/q^{r-i})\, z^i$
then has integer coefficients (by the divisibility) and all roots on
$|z| = 1$ (by the Riemann hypothesis).  By Kronecker's
theorem~\cite{Kronecker}---a monic polynomial in $\mathbb{Z}[x]$
whose roots all lie on the unit circle is a product of cyclotomic
polynomials---the result follows.
\end{proof}

This is verified for all 292 distinct $L$-polynomials in our dataset,
including 26 from the non-CM family.  The cyclotomic polynomials
$\Phi_n$ appearing are those with $n \in \{1, 2, 3, 4, 5, 6, 8, 10, 12\}$.

\begin{conjecture}[Convergence to Haar measure]\label{conj:haar}
Let $\mu_q$ denote the empirical measure on the unit circle obtained by
placing a point mass of weight $N_\lambda(q)/N(q)$ at each unitarized
root $\zeta_j$ of type~$\lambda$, for each cyclotomic type $\lambda$
occurring in the family at~$q$.  As $q \to \infty$, $\mu_q$ converges
weakly to the pushforward of Haar measure on $\USp(r)$ under the
eigenvalue map.  Consequently, the finite-sum formula~\eqref{eq:formula}
converges to the $\USp(r)$ integral
$M_s(d) = -\int_{\USp(r)} \tr(g^d)\, d\mu(g)$.
\end{conjecture}

\begin{remark}\label{rem:haar_evidence}
The type count grows $38 \to 103 \to 147$ for $q = 7 \to 11 \to 13$,
with roots of cyclotomic polynomials of increasingly high order filling
the circle.  The equidistribution of Frobenius classes in $\USp(r)$ for
families of elliptic curves is a theorem of
Katz--Sarnak~\cite{KS}; the convergence of $\mu_q$ should follow from
this equidistribution applied to our family.  Making this precise
requires controlling the error in approximating the continuous Haar
measure by the discrete type measure, which we leave as an explicit
target (Open Problem~2, \S\ref{sec:discussion}).
\end{remark}

\subsection{The formula}

\begin{theorem}[Finite-sum murmuration identity]\label{thm:formula}
For any family of elliptic curves over $\Fq(t)$ with cyclotomic
$L$-polynomial types $\{\lambda\}$, the $|\Sha|$-stratified
mean Frobenius trace at degree-$d$ places for the $|\Sha| = s$
stratum is
\begin{equation}\label{eq:formula}
  M_s(d, q) = -\sum_{\lambda} f_{\lambda, s} \cdot p_d(\lambda),
\end{equation}
where $f_{\lambda, s} = N(\lambda, s) / N(s)$ is the fraction of
$|\Sha| = s$ curves of type $\lambda$, and
$p_d(\lambda) = \sum_{j} \zeta_j^d$ is the degree-$d$ power sum of the
unitarized roots $\zeta_j = \alpha_j / q$ of type $\lambda$.
\end{theorem}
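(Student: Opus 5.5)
The identity is a linearity statement, so the plan is to make the normalization of $M_s(d,q)$ precise, show that the relevant per-curve quantity depends only on the cyclotomic type, and then average. First I fix the per-curve quantity. Over $\Fq(t)$ the exact explicit formula is not the per-place identity \eqref{eq:trace}, which is stated too loosely (at a single place of degree $d$ one cannot isolate $-\sum_j\alpha_j^d$). What is exact is the surface trace sum \eqref{eq:Sn_formula}: $S_d(D)=\sum_{t_0\in\mathbb{F}_{q^d}}a_{t_0}(E_D)=-\sum_j\alpha_j^d$. By the Lefschetz trace formula on $H^1(\Pp,R^1\pi_*\Ql)$ from the proof of Proposition~\ref{thm:main}, this sum includes the bad fibres with their split or non-split values $\pm1$. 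I will therefore read ``mean Frobenius trace at degree-$d$ places'' as the normalized quantity $-q^{-d}S_d(D)$, suitably organized by places of degree dividing $d$. Then the unitarization $\zeta_j=\alpha_j/q$ gives
\[
  -q^{-d}S_d(D)=\sum_j \zeta_j^d = p_d(\lambda(D)).
\]
Here $\lambda(D)$ is the cyclotomic type of $L_{\mathrm{unit}}$ supplied by Proposition~\ref{prop:kronecker}. The sign convention in \eqref{eq:formula} is absorbed by defining $M_s$ as the stratum average of $-p_d$. I will spell out this normalization explicitly, because the whole content of the theorem rests on it.

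Second, I check that the type is a well-defined label and that the stratum decomposes by it. The polynomial $L_{\mathrm{unit}}$ determines the multiset $\{\zeta_j\}$, and hence $p_d(\lambda)$ for every $d$. Conversely, together with $q$, the type determines $L(E_D,T)$, since $\alpha_j=q\zeta_j$. By \eqref{eq:bsd}, $|\Sha|=L_{\mathrm{unit}}(1)$ is a function of $\lambda$ alone. So the $|\Sha|=s$ stratum is a disjoint union of the type classes $\{D:\lambda(D)=\lambda\}$ with $\Phi$-value product $L_{\mathrm{unit}}(1)=s$, of sizes $N(\lambda,s)$ summing to $N(s)$. This is precisely the fact, from the ``local factors only'' cell being empty, that no type is split between strata. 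For the general-family version of the statement, where Tamagawa numbers or torsion might vary, I would instead let $N(\lambda,s)$ count curves of type $\lambda$ with $|\Sha|=s$. The argument below does not need $\lambda$ to determine $s$.

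Third, I average. Writing $M_s(d,q)=N(s)^{-1}\sum_{D:\,|\Sha(E_D)|=s}\bigl(-p_d(\lambda(D))\bigr)$ and grouping the sum by type gives $-\sum_\lambda \frac{N(\lambda,s)}{N(s)}\,p_d(\lambda)$, which is \eqref{eq:formula}. The sum is finite because $\deg L\le 8$ and only finitely many cyclotomic polynomials of bounded degree exist.

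The main obstacle is the first step, not the algebra. The mean Frobenius trace in \S\ref{sec:murmurations} was defined as an average over places of degree exactly $d$ with per-place weights. That average differs from $-q^{-d}S_d$ by the contributions of places of lower degree dividing $d$ (a M\"obius-type correction) and by the normalization $\#\{v:\deg v=d\}\approx q^d/d$. I would handle this by stating the theorem for the $\mathbb{F}_{q^d}$-point-sum normalization, for which the identity is exact including bad fibres. I would then note that the place-degree version follows by M\"obius inversion over divisors of $d$: it replaces $p_d$ by the corresponding combination $\sum_{e\mid d}\mu(d/e)\,p_e$, which is still a linear function of the type, so the reweighting structure survives verbatim.
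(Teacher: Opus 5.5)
Your core argument is the paper's regrouping, but it rests on a sounder input. The paper justifies \eqref{eq:formula} by saying the mean trace at good places is determined by the $L$-polynomial through the per-place formula \eqref{eq:trace}. That formula cannot hold place by place: in Example~\ref{ex:worked} the degree-$1$ traces are $3,-6,1,\dots$, while $-\sum_j\alpha_j=-22$. You instead use the aggregate identity \eqref{eq:Sn_formula}, which is exact with the bad fibres included. Define $M_s(d,q)$ as the stratum average of $q^{-d}S_d(D)=-p_d(\lambda(D))$; mind the sign here, since the normalized mean trace is $+q^{-d}S_d$, not $-q^{-d}S_d$ as you first wrote. With that definition your regrouping proves \eqref{eq:formula} exactly. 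As a check, at $d=1$ it reproduces Table~\ref{tab:formula}: for $|\Sha|=1$, $-(11\cdot 4+825\cdot 2+462\cdot 2)/2288=-1.144$.

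The gap is your final step. The average over places of degree exactly $d$ does not follow by M\"obius inversion, and for $d\ge 2$ it is in general not a function of the type at all. Let $\beta_v,\bar\beta_v$ be the Frobenius eigenvalues on the fibre at $v$. A place $v$ of degree $e\mid d$ contributes $e\,a_v^{(d/e)}$ to $S_d$, where $a_v^{(m)}=\beta_v^m+\bar\beta_v^m$; it does not contribute $e\,a_v$. M\"obius inversion works for point counts only because every point carries the same weight at every level of the tower, and that fails for traces. Already for $d=2$,
\[
  S_2-S_1=2\sum_{\deg v=2}a_v+\sum_{\deg v=1}\bigl(a_v^{(2)}-a_v\bigr),
  \qquad a_v^{(2)}=a_v^2-2q \text{ at good } v .
\]
So the degree-$2$ sum involves the second moment $\sum_{t_0\in\Fq}a_{t_0}^2$. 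That moment is controlled by $\mathrm{Sym}^2 R^1\pi_*\Ql$, not by $H^1(\Pp,R^1\pi_*\Ql)$, and is not determined by $L(E_D,T)$. It is therefore not a linear combination of the $p_e$, let alone $\sum_{e\mid d}\mu(d/e)p_e$.

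The paper's own numbers reflect this: the $d\ge 2$ rows of Table~\ref{tab:formula} are not the type averages from Table~\ref{tab:surface}. For $|\Sha|=4$, $d=2$, the point-sum formula gives $-990\cdot 3/2035\approx -1.46$, not $-0.553$. So drop the M\"obius claim and state the theorem with the $\mathbb{F}_{q^d}$-point-sum normalization. That normalization agrees with the average of \S\ref{sec:murmurations} (up to a constant factor) only at $d=1$.
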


This is an exact identity---a reweighting, not an approximation: the
mean trace at good places is determined by the $L$-polynomial, so the
stratified mean is the composition-weighted average of type-specific
power sums.  The content of this identity is not the formula itself,
which follows directly from the trace determination, but the observation
that it provides an exact, computable framework for the
$|\Sha|$-stratified murmuration density, replacing the statistical
approximations necessary over~$\mathbb{Q}$.

\begin{table}[ht]
\centering
\begin{tabular}{@{}ccrrr@{}}
\toprule
$|\Sha|$ & $d$ & Observed & Formula & Residual \\
\midrule
$1$ & 1 & $-1.144$ & $-1.144$ & $0.000$ \\
$1$ & 2 & $-0.551$ & $-0.551$ & $0.000$ \\
$1$ & 3 & $+1.022$ & $+1.022$ & $0.000$ \\
$1$ & 4 & $+4.438$ & $+4.438$ & $0.000$ \\
\midrule
$4$ & 1 & $+0.487$ & $+0.487$ & $0.000$ \\
$4$ & 2 & $-0.553$ & $-0.553$ & $0.000$ \\
$4$ & 3 & $+2.000$ & $+2.000$ & $0.000$ \\
$4$ & 4 & $+5.529$ & $+5.529$ & $0.000$ \\
\bottomrule
\end{tabular}
\caption{Formula verification at $q = 11$, $L$-degree~$8$,
  $|\Sha| \in \{1, 4\}$ strata.  Since $|\Sha|$ is constant within each
  $L$-polynomial class (Proposition~\ref{thm:main}), the formula is an exact
  identity with zero residuals.}
\label{tab:formula}
\end{table}


\section{Sha as an \texorpdfstring{$L$}{L}-polynomial invariant}%
\label{sec:sha}

\subsection{Sha is constant within each L-polynomial class}

By Proposition~\ref{thm:main}, $|\Sha|$ is determined by the $L$-polynomial.
From the BSD side (\S\ref{sec:background}), $|\Sha| = L(1/q) =
L_{\mathrm{unit}}(1) = \prod \Phi_n(1)^{e_n}$.  Since
$\Phi_p(1) = p$ for primes $p$ and $\Phi_n(1) = 1$ when $n$ has
two or more distinct prime factors, the $|\Sha|$ value is read
directly from the cyclotomic type.

\subsection{The Hecke factorization}

Since $E_0$ has CM by $\mathbb{Z}[i]$, the $L$-function factors as
$L(E_D, T) = L(\psi \cdot \chi_D, T) \cdot L(\bar\psi \cdot \chi_D, T)$
over $\mathbb{Q}(i)$, where $\psi$ is the Hecke character of $E_0$.
If $L_1 = \bar L_2$ (conjugate pair), then $L_{\mathrm{unit}}$ is a
perfect square over $\mathbb{Q}$; otherwise it is not.  This
square/non-square distinction organizes the $L$-polynomial types
but does not affect $|\Sha|$ variation, which is now understood
to be absent within each class.

\subsection{Exhaustive verification}\label{sec:rigidity}

The surface trace sum $S_n(D) = -\sum_{j=1}^r \alpha_j^n$
(\eqref{eq:Sn_formula}) is verified exhaustively against direct
computation at $q = 11$ for all $8{,}107$ rank-0 curves with
$L$-degree $\geq 6$.  Table~\ref{tab:surface} shows the 12
$L$-polynomial classes with degree~8.

\begin{table}[ht]
\centering
\begin{tabular}{@{}lrrrr@{}}
\toprule
$L$-polynomial class & $N$ & $|\Sha|$ & $S_1$ & $S_2$ \\
\midrule
$\Phi_6^4$                          &    11 & $1$   & $-44$ & $484$ \\
\addlinespace[2pt]
$\Phi_6^2 \cdot \Phi_{12}$          &   825 & $1$   & $-22$ & $0$ \\
$\Phi_{10}^2$                       &   462 & $1$   & $-22$ & $242$ \\
\addlinespace[2pt]
$\Phi_8^2$                          & 1,045 & $4$   & $0$   & $0$ \\
$\Phi_{12}^2$                       &   990 & $1$   & $0$   & $-484$ \\
$\Phi_3^2 \cdot \Phi_6^2$           &   110 & $9$   & $0$   & $484$ \\
\addlinespace[2pt]
$\Phi_2^2 \cdot \Phi_6 \cdot \Phi_{12}$ & 990 & $4$ & $11$  & $-363$ \\
$\Phi_2^2 \cdot \Phi_4^2 \cdot \Phi_6$ & 330 & $16$ & $11$  & $363$ \\
$\Phi_2^4 \cdot \Phi_6^2$           &   165 & $16$  & $22$  & $-242$ \\
\addlinespace[2pt]
$\Phi_5^2$                          &   462 & $25$  & $22$  & $242$ \\
$\Phi_3^2 \cdot \Phi_{12}$          &   825 & $9$   & $22$  & $0$ \\
\addlinespace[2pt]
$\Phi_3^4$                          &    11 & $81$  & $44$  & $484$ \\
\bottomrule
\end{tabular}
\caption{Surface trace sums at $q = 11$, degree-8 $L$-polynomials
  (12 types).  $|\Sha| = L_{\mathrm{unit}}(1) = \prod \Phi_n(1)^{e_n}$
  is constant within each class.
  $S_1$ values are multiples of $q = 11$; $S_2$ values are multiples of
  $q^2 = 121$.}
\label{tab:surface}
\end{table}

For each of the 12 degree-8 classes ($6{,}226$ curves), $S_n$ is
constant within each class: $n = 1, 2, 3$ (exhaustive, zero exceptions),
$n = 4$ (sampled, 3 curves per class, zero exceptions).
The same holds for the 4 degree-6 classes ($1{,}881$ curves).

\begin{remark}
Conjugate $L$-polynomial classes give negated trace sums:
$\Phi_6^2 \cdot \Phi_{12}$ has $S_1 = -22$ while
$\Phi_3^2 \cdot \Phi_{12}$ has $S_1 = +22$; similarly $\Phi_6^4$ gives
$S_1 = -44$ and $\Phi_3^4$ gives $S_1 = +44$.  This reflects the
complex conjugation symmetry of the cyclotomic roots.
\end{remark}

\subsection{The discriminant factor degree constraint}

\begin{lemma}\label{lem:even}
If $-4/27$ is a quadratic non-residue in $\Fq$, then every irreducible
factor of $\Delta_0(t) = 4 + 27D(t)^2$ over $\Fq$ has even degree.
\end{lemma}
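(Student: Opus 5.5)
The plan is to look at the roots of $\Delta_0$. Let $P$ be an irreducible factor of $\Delta_0 = 4 + 27D(t)^2$ over $\Fq$, of degree $m$, and let $\theta \in \mathbb{F}_{q^m}$ be a root of $P$, so $\Fq(\theta) = \mathbb{F}_{q^m}$. The hypothesis $q > 3$ (standing in \S\ref{sec:background}) makes $27$ invertible in $\Fq$. Evaluating at $\theta$ gives $27D(\theta)^2 = -4$, and hence
\[
  D(\theta)^2 = -4/27 .
\]
Since $D \in \Fq[t]$, we have $D(\theta) \in \mathbb{F}_{q^m}$. So $-4/27$ is a square in $\mathbb{F}_{q^m}$.

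Next I use the standard fact about quadratic extensions of finite fields. Take $a \in \Fq^\times$ a non-square, with $q$ odd. Then $a$ becomes a square in $\mathbb{F}_{q^m}$ if and only if $m$ is even. This holds because the roots of $x^2 - a$ generate the unique quadratic extension $\mathbb{F}_{q^2}$ of $\Fq$, and $\mathbb{F}_{q^2} \subseteq \mathbb{F}_{q^m}$ exactly when $2 \mid m$. Alternatively, one can use the norm criterion: $a$ is a square in $\mathbb{F}_{q^m}$ iff $a^{(q^m-1)/2} = 1$. Since $a^{(q-1)/2} = -1$ and $(q^m-1)/(q-1) = 1 + q + \cdots + q^{m-1} \equiv m \pmod 2$ for odd $q$, we get $a^{(q^m-1)/2} = (-1)^m$. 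Applying this with $a = -4/27$ and $D(\theta)$ as the square root, we conclude $m$ is even.

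One more point needs noting: $-4/27 \neq 0$, which also follows from $q > 3$. Squarefreeness of $D$ is not needed for this argument. There is no real obstacle here. The only care required is to justify that $q$ is odd, so that "non-residue" makes sense and the parity computation is valid. This follows from $q > 3$ being a power of a prime $\geq 5$, since the paper's families use $q$ prime in $\{7, 11, 13\}$. If $q$ could be a power of $2$, every element of $\Fq$ would be a square, and the hypothesis would be vacuous.
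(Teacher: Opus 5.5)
Your proposal is correct and follows essentially the same route as the paper: evaluating at a root shows that $-4/27$ is a square in $\mathbb{F}_{q^m}$, and Euler's criterion with $(q^m-1)/(q-1) \equiv m \pmod 2$ then forces $m$ to be even. Your alternative justification, that $\mathbb{F}_{q^2} \subseteq \mathbb{F}_{q^m}$ exactly when $2 \mid m$, is an equally valid and slightly quicker way to finish.
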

\newpage
\begin{proof}
The roots of $\Delta_0$ in $\overline{\Fq}$ are the values $t_0$ where
$D(t_0)^2 = -4/27$.  If $f$ is an irreducible factor of $\Delta_0$ of
odd degree $d$, then $f$ has a root $t_0 \in \mathbb{F}_{q^d}$, giving
$D(t_0)^2 = -4/27$ in $\mathbb{F}_{q^d}$.  In particular, $-4/27$
is a square in $\mathbb{F}_{q^d}^*$.  An element $a \in \Fq^*$ is a
square in $\mathbb{F}_{q^d}^*$ iff $a^{(q^d - 1)/2} = 1$.  Since $a$
is QNR in $\Fq$, we have $a^{(q-1)/2} = -1$, so
\[
  a^{(q^d - 1)/2} = \bigl(a^{(q-1)/2}\bigr)^{(q^d - 1)/(q - 1)}
    = (-1)^{(q^d - 1)/(q - 1)}.
\]
For odd $d$, the sum $\frac{q^d - 1}{q - 1} = 1 + q + \cdots + q^{d-1}$
has $d$ terms, each odd (since $q$ is odd), so the sum is odd.  Thus
$a^{(q^d - 1)/2} = -1 \neq 1$, contradicting the assumption that $a$
is a square in $\mathbb{F}_{q^d}^*$.
\end{proof}

At $q = 7$ and $q = 11$, $-4/27$ is QNR, so all discriminant factors
have even degree.  At $q = 13$, $-4/27$ is QR, and $67\%$ of
discriminant factors have odd degree (Figure~\ref{fig:degrees}).

\begin{figure}[ht]
\centering
\includegraphics[width=\textwidth]{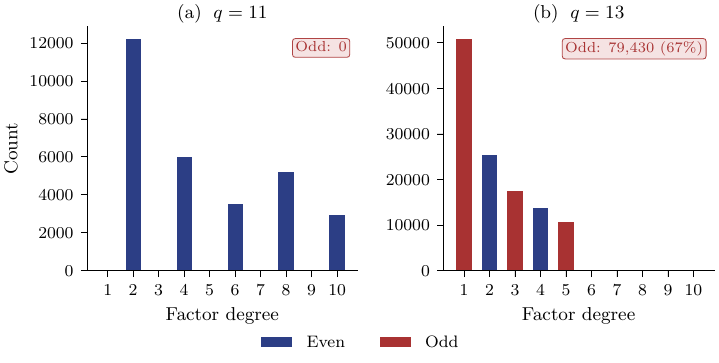}
\caption{Discriminant factor degrees: $q = 11$ (left, all even)
  vs.\ $q = 13$ (right, $67\%$ odd).  Rank~$0$, $L$-degree $\geq 6$.}
\label{fig:degrees}
\end{figure}

\subsection{Frobenius eigenvalue structure}

\begin{figure}[t]
\centering
\includegraphics[width=0.48\textwidth]{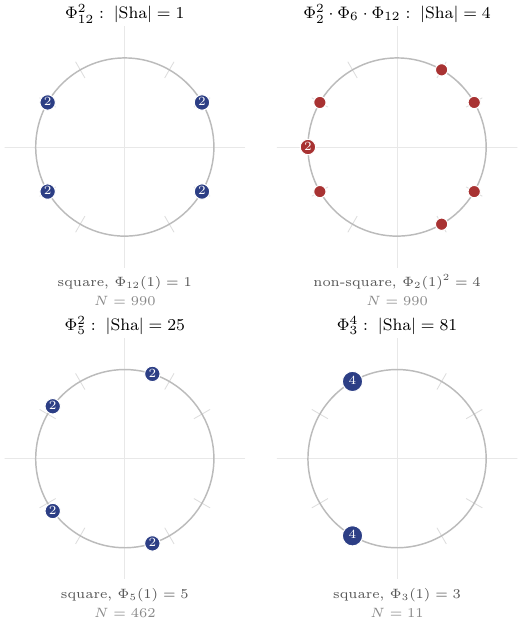}
\caption{Frobenius eigenvalues on the unit circle for four representative
  $L$-polynomial classes at $q = 11$.  Numbers on dots indicate multiplicity.
  Blue: square factorization (all even multiplicities); red: non-square.
  $|\Sha| = \prod \Phi_n(1)^{e_n}$: the root positions determine
  $|\Sha|$ via cyclotomic evaluation at $z = 1$.}
\label{fig:eigenvalues}
\end{figure}

The cyclotomic factorization of Proposition~\ref{prop:kronecker} places
all Frobenius eigenvalues at roots of unity on the circle $|z| = q$.
For the unitarized $L$-polynomial
$L_{\mathrm{unit}}(z) = \prod \Phi_n(z)^{e_n}$, the roots are roots of
unity of order~$n$, each appearing with multiplicity~$e_n$.  The
geometric configuration of these roots---their orders, multiplicities,
and symmetry---determines $|\Sha|$ via
$|\Sha| = \prod \Phi_n(1)^{e_n}$, and simultaneously determines the
surface trace sums $S_d = -\sum_j \alpha_j^d$ via the explicit formula.

Two structural distinctions organize the degree-8 classes
(Table~\ref{tab:surface}, Figure~\ref{fig:eigenvalues}).

First, the \emph{square/non-square distinction}: a type is square if all
multiplicities $e_n$ are even, i.e., $L_{\mathrm{unit}}(z)$ is a perfect
square in $\mathbb{Z}[z]$.  The eight square types are
$\Phi_6^4$, $\Phi_{10}^2$, $\Phi_8^2$, $\Phi_{12}^2$,
$\Phi_3^2 \cdot \Phi_6^2$, $\Phi_2^4 \cdot \Phi_6^2$, $\Phi_5^2$,
$\Phi_3^4$.  The four non-square types
$\Phi_6^2 \cdot \Phi_{12}$,
$\Phi_2^2 \cdot \Phi_6 \cdot \Phi_{12}$,
$\Phi_2^2 \cdot \Phi_4^2 \cdot \Phi_6$,
$\Phi_3^2 \cdot \Phi_{12}$
each contain at least one $\Phi_n$ of odd multiplicity.  This
distinction reflects the Hecke factorization (\S\ref{sec:sha}):
$L_{\mathrm{unit}}$ is a perfect square over $\mathbb{Q}$ exactly when
$L_1 = \bar L_2$ in the CM decomposition.

Second, the \emph{conjugate-class symmetry}: the operation
$\zeta \mapsto -\zeta$ on unitarized roots exchanges $\Phi_3 \leftrightarrow \Phi_6$
and $\Phi_5 \leftrightarrow \Phi_{10}$, while fixing $\Phi_4$, $\Phi_8$,
and $\Phi_{12}$.  For a type $\lambda$ and its image $\lambda^*$ under
this operation, the power sums satisfy
$p_d(\lambda^*) = (-1)^d\, p_d(\lambda)$, and hence
$S_d(\lambda^*) = (-1)^d\, S_d(\lambda)$.  In Table~\ref{tab:surface},
this pairs $\Phi_6^4$ ($S_1 = -44$) with $\Phi_3^4$ ($S_1 = +44$),
$\Phi_6^2 \cdot \Phi_{12}$ ($S_1 = -22$) with
$\Phi_3^2 \cdot \Phi_{12}$ ($S_1 = +22$), and $\Phi_{10}^2$
($S_1 = -22$) with $\Phi_5^2$ ($S_1 = +22$).  The self-conjugate
types---$\Phi_8^2$, $\Phi_{12}^2$, $\Phi_3^2 \cdot \Phi_6^2$---all
have $S_1 = 0$.  This conjugate-class imbalance in the type composition
of an $|\Sha|$ stratum is exactly the reason the stratified murmuration
density~\eqref{eq:formula} can be non-zero at odd place degrees.
Concretely, the $|\Sha| = 1$ stratum at $q = 11$ has four types with
$S_1 \in \{-44, -22, -22, 0\}$, giving a negative weighted mean at
$d = 1$ (Table~\ref{tab:formula}), while the $|\Sha| = 4$ stratum
mixes $\Phi_8^2$ ($S_1 = 0$) with $\Phi_2^2 \cdot \Phi_6 \cdot \Phi_{12}$
($S_1 = +11$), giving a positive mean---precisely the anti-phase
pattern visible in Figure~\ref{fig:murmuration}.


\section{Discussion and open problems}\label{sec:discussion}

\subsection{Relation to prior work}

These results confirm and sharpen the findings of~\cite{WachsQ}
over $\mathbb{Q}$, where $|\Sha|$ was shown to modulate murmuration
shape for 3 million curves from the Cremona database.  Over $\Fq(t)$,
the modulation is entirely between-type, as follows immediately from the
$L$-polynomial invariance of $|\Sha|$ (Proposition~\ref{thm:main}).

The key advance over~$\mathbb{Q}$ is that BSD is a theorem in this
setting, which makes $|\Sha| = L(1/q)$ exact and renders the
$|\Sha|$-murmuration mechanism fully transparent.  Over $\mathbb{Q}$,
where BSD is conjectural, this mechanism could only be inferred
statistically.

Our work complements Sawin--Sutherland~\cite{SS}, who proved
murmurations over $\mathbb{Q}$ ordered by height via Voronoi
summation.  Their analytic approach and our algebraic approach address
the same phenomenon from different directions; together they suggest
murmurations are a universal feature of families of $L$-functions.

The cyclotomic factorization (Proposition~\ref{prop:kronecker}) implies
that at fixed $q$, the monodromy group is finite (embedded in the
symmetric group of roots of unity), in contrast to the $\USp(r)$
monodromy predicted by Katz--Sarnak as $q \to \infty$.  The finite-sum
formula (\Cref{thm:formula}) is the fixed-$q$ avatar of the $\USp(r)$
integral, with convergence described in Conjecture~\ref{conj:haar}.

\subsection{Open problems}

\begin{enumerate}[nosep]
\item \textbf{Extension to other families.}
  The non-CM family exhibits the same cyclotomic structure (Appendix~\ref{app:noncm}).  Does the finite-sum formula hold for general elliptic surfaces over $\Fq(t)$?
\item \textbf{Type counts as polynomials in $q$.}
  The counts $N_\lambda(q)$ by $L$-polynomial type appear to be polynomials in $q$; proving this would yield asymptotic murmuration densities as $q \to \infty$ and connect the finite-sum formula to the Katz--Sarnak integral.
\item \textbf{Higher-rank murmurations.}
  Our data is limited to analytic ranks 0 and~1.  The finite-sum framework extends formally to higher ranks; do the predicted patterns appear in families with rank $\geq 2$?
\end{enumerate}


\clearpage
\appendix

\section{Non-CM family verification}\label{app:noncm}

To verify that the cyclotomic structure is a consequence of the Weil
conjectures and Kronecker's theorem rather than CM, we computed the
family $y^2 = x^3 + 3x + 2 + D(t)$ at $q = 7$ (343 curves,
$\deg D \leq 3$, 10 $L$-polynomial types) and $q = 11$ (14,641 curves,
$\deg D \leq 4$, 17 types).

All 26 distinct $L$-polynomials factor into cyclotomic polynomials, and
the $q^i \mid c_i$ divisibility holds without exception.  Murmurations
exist with $z$-scores up to 33.52 at $q = 11$.  The $L$-polynomial
diversity is lower than for the CM family at the same $q$ (17 vs.\ 103
types at $q = 11$), consistent with the observed arithmetic differences
between the non-CM and CM base families.



\begin{thebibliography}{99}

\bibitem{BBLLD}
J.~Bober, A.~Booker, M.~Lee, and D.~Lowry-Duda,
\emph{Murmurations of elliptic curves in the weight aspect},
preprint, arXiv:2310.07624, 2023.

\bibitem{Groth}
A.~Grothendieck,
\emph{Le groupe de Brauer~III: Exemples et compl\'ements},
in \emph{Dix expos\'es sur la cohomologie des sch\'emas},
North-Holland, Amsterdam, 1968, pp.~88--188.

\bibitem{HLOP2022}
Y.-H.~He, K.-H.~Lee, T.~Oliver, and A.~Pozdnyakov,
\emph{Murmurations of elliptic curves},
Experimental Mathematics \textbf{34} (2025), no.~3, 528--540,
\texttt{arXiv:2204.10140}.

\bibitem{KS}
N.~Katz and P.~Sarnak,
\emph{Random Matrices, Frobenius Eigenvalues, and Monodromy},
AMS Colloquium Publications, vol.~45, 1999.

\bibitem{KT}
K.~Kato and F.~Trihan,
\emph{On the conjectures of Birch and Swinnerton-Dyer in characteristic $p > 0$},
Invent.\ Math.\ \textbf{153} (2003), 537--592.

\bibitem{Milne}
J.~S.~Milne,
\emph{On a conjecture of Artin and Tate},
Ann.\ of Math.\ \textbf{102} (1975), 517--533.

\bibitem{SS}
W.~Sawin and A.~V.~Sutherland,
\emph{Murmurations of elliptic curves ordered by height},
preprint, arXiv:2504.12295v2, 2025.

\bibitem{Ulmer}
D.~Ulmer,
\emph{Curves and Jacobians over function fields},
in \emph{Arithmetic Geometry over Global Function Fields},
Birkh\"auser, 2014, pp.~283--337.

\bibitem{WachsQ}
D.~Wachs,
\emph{BSD invariants and murmurations of elliptic curves},
preprint, arXiv:2603.04604, 2026.

\bibitem{Kronecker}
L.~Kronecker,
\emph{Zwei S\"atze \"uber Gleichungen mit ganzzahligen Coefficienten},
J.~Reine Angew.\ Math.\ \textbf{53} (1857), 173--175.

\bibitem{Zub}
N.~Zubrilina,
\emph{Murmurations},
Invent.\ Math.\ \textbf{241} (2025), 627--680.

\end{thebibliography}
\end{document}